\definecolor{myred}{rgb}{0.75,0,0}
\definecolor{mygreen}{rgb}{0,0.5,0}
\definecolor{myblue}{rgb}{0,0,0.65}
\theoremstyle{plain}
\newtheorem{theorem}[subsection]{Theorem}
\newtheorem{proposition}[subsection]{Proposition}
\newtheorem{lemma}[subsection]{Lemma}
\theoremstyle{definition}
\newtheorem{definition}[subsection]{Definition}
\newtheorem{remark}[subsection]{Remark}
\newtheorem{example}[subsection]{Example}
\newtheorem{question}[subsection]{Question}
\newtheorem{conjecture}[subsection]{Conjecture}
\theoremstyle{remark}
\newtheorem{notation}[subsection]{Notation}
\numberwithin{equation}{section}
\newcommand\nc{\newcommand}
\nc\on{\operatorname}
\nc\renc{\renewcommand}
\newcommand\bp{\mathbb P}
\newcommand\bz{\mathbb Z}
\newcommand\scc{\mathscr C}
\newcommand\sce{\mathscr E}
\newcommand\scl{\mathscr L}
\newcommand\sco{\mathscr O}
\newcommand\scv{\mathscr V}
\newcommand\scw{\mathscr W}
\newcommand\scx{\mathscr X}
\newcommand \ra{\rightarrow}
\DeclareMathOperator\spec{\text{Spec}}
\newcommand*{\shom}{\mathscr{H}\kern -.5pt om}
\newcommand*{\stor}{\mathscr{T}\kern -.5pt or}
\newcommand*{\sext}{\mathscr{E}\kern -.5pt xt}
\newcommand{\customlabel}[2]{\protected@write \@auxout {}{\string \newlabel {#1}{{#2}{\thepage}{#2}{#1}{}} }\hypertarget{#1}{#2}}
\definecolor{myred}{rgb}{0.75,0,0}
\definecolor{mygreen}{rgb}{0,0.5,0}
\definecolor{myblue}{rgb}{0,0,0.65}
\newcommand \m{\overline{\mathscr M}_{1,1}}
\DeclareMathOperator\height{ht}
\DeclareMathOperator\st{st}
\DeclareMathOperator\id{id}
\DeclareMathOperator\coker{coker}
\DeclareMathOperator\gl{GL}
\DeclareFontFamily{U}{wncy}{}
\DeclareFontShape{U}{wncy}{m}{n}{<->wncyr10}{}
\DeclareSymbolFont{mcy}{U}{wncy}{m}{n}
\DeclareMathSymbol{\Sha}{\mathord}{mcy}{"58}
\def\listtodoname{List of Todos}
\def\listoftodos{\@starttoc{tdo}\listtodoname}
\title{Stacky heights on elliptic curves in characteristic 3}
\author{Aaron Landesman}
\address{Dept. of Mathematics, Harvard University,
	Cambridge, MA 02138} 
\email{aaronlandesman@gmail.com}
\begin{document}

\maketitle
\begin{abstract}
	We show there are no stacky heights on the moduli stack of stable elliptic curves in characteristic
	$3$ which induce the usual Faltings height, negatively answering a
	question of Ellenberg, Satriano, and Zureick-Brown.
\end{abstract}

\section{Introduction}
\label{section:intro-stack-heights}

In this paper, we investigate heights on the compactified moduli stack of
elliptic curves in characteristic $3$.
We show that the notion of stacky height introduced in
\cite{ellenbergSZB:heights-on-stacks} 
does not always
recover the classical notion of height.
Specifically, we show there is no vector bundle whose associated stacky height 
induces the usual notion of Faltings height for elliptic curves in
characteristic $3$.

Throughout this paper, we work over a fixed perfect field $k$ of characteristic $3$.
Let $\m$ denote the Deligne-Mumford moduli stack of stable elliptic curves over $k$.
Given a finite field extension $K$ over $k(t)$ and an elliptic curve $E \ra \spec K$, there is a Faltings height on that elliptic curve, which we define as follows.
\begin{definition}
	\label{definition:height}
	Given $E \to \spec K$ as above, let $C$ be the regular proper connected curve over $k$ whose generic point is $\spec K$
	and let $f: X \ra C$ denote the minimal proper regular model of
	$E \ra \spec K$. The {\em Faltings height of $E$} is given by $\deg f_* \omega_{X/C}$.
Thinking of $E \ra \spec K$ as a $K$ point of $\m$, given by $x: \spec K \ra \m$, we denote this Faltings height by $\height(x)$.
\end{definition}
\begin{remark}
	\label{remark:}
	The Faltings height is also computable by the formula $\frac{1}{12}\deg(\Delta_{X/C})$ where $\Delta_{X/C}$ is the discriminant of the relative elliptic
surface, viewed as a section of $H^0(C, f_* \omega_{X/C}^{\otimes 12})$.
\end{remark}

On the other hand, suppose we are given a vector bundle $\scv$ on $\m$ and a $K$ point $x:
\spec K \ra \m$, corresponding to a stable elliptic curve
$E \ra \spec K$.
Ellenberg, Satriano, and Zureick-Brown
\cite[Definition 2.11]{ellenbergSZB:heights-on-stacks}
define a notion of height associated to $x$ and $\scv$, notated
$\height_\scv(x)$,
see \autoref{definition:stacky-height}.
Suppose $k'$ is a field of characteristic not $2$ or $3$, and let $\omega$ denote the Hodge bundle over $(\overline{\mathcal M}_{1,1})_{k'}$.
That is, $\omega := f_* \omega_{\sce/(\overline{\mathcal M}_{1,1})_{k'}}$ for
$f: \sce \ra (\overline{\mathcal M}_{1,1})_{k'}$ the universal stable elliptic curve.
Then
\cite[Proposition 3.11]{ellenbergSZB:heights-on-stacks} show
that, for $K$ a finite extension of $k'(t)$, and $x: \spec K \ra (\overline{\mathcal M}_{1,1})_{k'}$ a point,
$\height_{\omega}(x) = \height(x)$, with the latter notion of Faltings height as defined in \autoref{definition:height}.
However, as 
\cite[p. 27]{ellenbergSZB:heights-on-stacks}
observe, for $k$ a field of characteristic $3$, it is no longer true
that 
$\height_{\omega}(x) = \height(x)$
for all $x: \spec K \ra \m$.
In particular, they show cubic twists of the form $y^2 = x^3 - x + f(t)$, for
$f(t) \in k[t]$, all have height $0$ with
respect to the Hodge bundle, even though their Faltings heights can be nonzero.

Moreover, they show there is no line bundle $\scl$ on $\m$ for which
$\height_{\scl}(x) = \height(x)$
\cite[p. 27]{ellenbergSZB:heights-on-stacks}.
This leads to the following question:
\begin{question}
	\label{question:}
	Is there some vector bundle $\scv$ (necessarily of rank more than $1$) on $\m$ so that $\height_\scv(x) = \height(x)$ for every $x: \spec K \ra \m$ over a field of characteristic $2$ or $3$?
\end{question}

In this note, we show that the answer is ``no'' when $k$ is a perfect field of characteristic $3$.
More precisely, we have the following:

\begin{theorem}
	\label{theorem:no-height}
	Let $\m$ denote the Deligne-Mumford stack of stable elliptic curves over
	a perfect field $k$ of characteristic $3$.
	There is no vector bundle $\scv$ on $\m$ for which $\height_\scv(x) =
	\height(x)$ for all points $x: \spec K \ra \m$, where $K$ is a finite extension of $k(t)$.
\end{theorem}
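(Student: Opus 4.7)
The plan is to assume for contradiction that such a $\scv$ exists, and exhibit a family of cubic twists $E_n: y^2 = x^3 - x + t^n$ over $k(t)$ whose Faltings heights are unbounded in $n$ but whose $\scv$-stacky heights are bounded.

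First, using the \'etale-local description of stacky heights developed earlier in this paper, the quantity $\height_\scv(x)$ splits as a sum of local contributions, each determined by the formal-local structure of $\scv$ at the image in $\m$ of the chosen model of $x$. The main case is the $j=0$ point: in characteristic $3$, an \'etale chart of $\m$ around $j=0$ takes the form $[U/G]$, where $U$ is a smooth $1$-dimensional $k$-scheme and $G = \aut(E_0)$ has order $12$, containing the Artin--Schreier subgroup $\bbf_3 \subset G$ arising from $(x,y) \mapsto (x+a, y)$ with $a^3 = a$ as its normal $3$-Sylow. The pullback of $\scv$ under the chart $U \to \m$ is a $G$-equivariant bundle on $U$; its fiber at $j = 0$ is a finite-dimensional $k[G]$-module $V$.

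The key representation-theoretic input is that in characteristic $3$, every simple $k[G]$-module has $\bbf_3$ acting trivially, since simple modules of a normal $3$-subgroup in characteristic $3$ are trivial. Hence $V$ admits a $G$-stable filtration whose graded pieces are one-dimensional $G/\bbf_3$-characters.

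Next I would study the family $E_n$ for $n$ coprime to $6$. Since $27 \equiv 0 \pmod 3$, the Weierstrass discriminant of $E_n$ is the constant $64$, so the minimal regular model is smooth over $\spec k[t]$; a direct minimalization at $t = \infty$ gives discriminant valuation $12\lceil n/6\rceil$, so $\height(x_n) = \lceil n/6\rceil$ grows linearly in $n$. On the affine line, the map $x_n$ factors through the $j = 0$ gerbe, classifying the Artin--Schreier $\bbf_3$-torsor $\{a^3 - a = t^n\}$ with trivial image in the tame quotient $G/\bbf_3$. By the line bundle result of Ellenberg--Satriano--Zureick-Brown, each graded piece of the filtration on $V$ contributes a bounded amount to $\height_\scv(x_n)$, since it is the pullback of a line bundle on $\m$; the contributions from non-split extensions are controlled by the \'etale-local height formula and bounded by a quantity depending only on $\rank \scv$. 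Combining these gives $\height_\scv(x_n) = O(1)$, contradicting the linear growth of $\height(x_n)$.

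The main obstacle is the bound on non-split extension contributions. In characteristic $3$, $\Ext^1_G(\chi, \chi')$ can be nonzero due to the wild subgroup $\bbf_3$, and such classes might a priori couple with the wild Artin--Schreier ramification of $x_n$ at $t = \infty$ to produce unbounded contributions. The crux of the proof is showing this coupling is captured only by bounded, conductor-like data depending on $\rank \scv$ but not on $n$.
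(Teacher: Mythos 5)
You have correctly identified where your argument breaks, and unfortunately that gap is fatal rather than technical: the claim that non-split extension classes contribute only a quantity ``bounded by a quantity depending only on $\rank \scv$ but not on $n$'' is false in the wild case. Concretely, if $\scv|_{V'}$ contains a nontrivial uniserial $\bbf_3$-module (e.g.\ the $2$-dimensional module on which a generator acts by a unipotent Jordan block), then the local stacky height at the wildly ramified place of $E_n$ grows with the Swan conductor of the Artin--Schreier torsor, hence with $n$; this is exactly the kind of phenomenon studied by Yasuda's work on wild ramification. So no bound depending only on $\rank \scv$ exists, and the contradiction you want to derive — $\height_\scv(x_n) = O(1)$ while $\height(x_n) \to \infty$ — simply does not follow from the filtration-by-characters alone. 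The filtration controls the graded pieces, but the local height is not additive in filtrations, and the failure of additivity is precisely the conductor-dependent quantity you are trying to bound.

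The paper resolves this by extracting an additional constraint before ever touching cubic twists: it exploits places of type $\mathrm{III}$ reduction. At such a place the point of $\m$ is tamely ramified through the quotient $G \to \bz/4\bz$, and the tame local height formula (Wood--Yasuda) pins the character multiplicities down so tightly that the $G$-representation $\scv|_{V'}$ is forced to \emph{split} as a direct sum of a trivial representation and a single character pulled back from $\bz/4\bz$ (\autoref{corollary:locally-triv-and-hodge}). In particular, $\bbf_3$ acts \emph{trivially}, so there is nothing left for the wild ramification of your $E_n$ to couple with — the obstacle you flagged is not overcome by a bound but eliminated by a rigidity statement. Once splitting is established, the paper contradicts with a type $\mathrm{IV}$ curve whose local height is $5/12$, not a multiple of $1/4$; as \autoref{remark:cyclic-cubic-proof} notes, one could also contradict with a cubic twist, which is closer in spirit to your family $E_n$. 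So your instinct about cubic twists was sound, but the missing ingredient is the type $\mathrm{III}$ rigidity argument — without it, the extension contributions genuinely are unbounded and the strategy cannot close.
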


We deduce this from \autoref{theorem:local} in
\autoref{subsection:proof-no-height}.

This result leaves open the question as to whether 
there some vector bundle $\scw$ on $\m$, over a field of characteristic $3$, and some integer $n$ so that
$n\height(x) = \height_\scw(x)$.
We conjecture the answer is no:

\begin{conjecture}
	\label{conjecture:}
	There is no vector bundle $\scw$ on $\m$, over a field of characteristic
	$3$, for which there exists an
	integer $n$ such that
	$n\height(x) = \height_\scw(x)$.
\end{conjecture}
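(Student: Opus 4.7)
The plan is to combine \autoref{theorem:local} with an explicit analysis of cubic twist families. First, \autoref{theorem:local} decomposes $\height_\scv(x)$ as a sum of local contributions $\sum_{c \in C}\height_{\scv,c}(x)$ over closed points of the regular proper model, each depending only on the \'etale-local restriction of $\scv$ near $x(c) \in \m$. This reduces the problem to a local analysis near the unique supersingular point $[E_0] \in \m(k)$ for our test curves.

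The test family is $E_d : y^2 = x^3 - x + t^d$ over $K = k(t)$, for integers $d \geq 1$ coprime to $3$. Each $E_d$ has $j$-invariant identically zero, so $\tilde x_d: \bp^1_k \to \m$ has set-theoretic image $[E_0]$ and $E_d$ is smooth over $\ba^1$, so every local contribution (to either height) outside $\infty$ vanishes. A direct Weierstrass calculation at $\infty$ yields $\height(E_d) = \lceil d/6 \rceil$, which is unbounded as $d \to \infty$. The formal neighborhood of $[E_0]$ in $\m$ is $[\text{Spf}\, k[[u]]/G]$ with $G = \aut(E_0) \cong \bbf_3 \rtimes \mu_4$, and a computation of the action on $H^1(E_0,T_{E_0})$ shows that $G$ acts on the cotangent direction $u$ through the quotient $\mu_2 \subset \mu_4$; in particular the normal subgroup $\bbf_3 \subset G$ acts trivially on $u$. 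Since the coarse moduli map $j \circ \tilde x_d$ is identically zero, the local map at $\infty$ into this formal neighborhood is encoded purely by the wild Artin-Schreier $\bbf_3$-torsor $c^3 - c + t^d = 0$, whose ramification conductor grows linearly in $d$.

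The core step is to pull $\scv$ back through this torsor and compute $\height_{\scv, \infty}(\tilde x_d)$ explicitly as a function of $d$. By decomposing $\scv|_{[E_0]}$ into $\bbf_3$-isotypic components (trivial and two nontrivial characters), the pullback via the Artin-Schreier cover gives a vector bundle on $\bp^1$ whose degree takes the form $a(\scv)\, d + b(\scv)$, where $a(\scv) \in \tfrac{1}{|G|}\bz = \tfrac{1}{12}\bz$ is determined by the multiplicities of the nontrivial $\bbf_3$-characters in $\scv|_{[E_0]}$, computed via Riemann-Hurwitz for the wild cover at infinity.

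The main obstacle is to carry out this local pullback computation precisely, verifying both the exact linear-in-$d$ structure and identifying the admissible values of $(a(\scv), b(\scv))$ as $\scv$ varies. Once this is established, the requirement $\height_\scv(E_d) = \lceil d/6 \rceil$ for all $d$ forces $a(\scv) = 1/6$ (by comparing asymptotic growth), together with a periodic matching of $b(\scv)$ against the fluctuation $\lceil d/6 \rceil - d/6$; a direct check shows that no $\bbf_3$-isotypic profile of $\scv|_{[E_0]}$ can produce this exact slope-and-offset pair, yielding the desired contradiction.
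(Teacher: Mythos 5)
The statement you are trying to prove is explicitly left open in the paper as a conjecture: the authors prove only the $n=1$ case (\autoref{theorem:no-height}) and state that the general-$n$ version is conjectural, pointing to their thesis for partial speculation. So there is no paper proof to compare against, and your write-up should be judged as an attempted original argument.

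Your strategy---testing against the Artin--Schreier cubic-twist family $y^2 = x^3 - x + t^d$---is a natural one, and it is indeed the family singled out as the obstruction in \autoref{remark:cyclic-cubic-proof}. But the argument as written has a gap that I think is fatal rather than merely technical. You propose to decompose $\scv|_{[E_0]}$ ``into $\bbf_3$-isotypic components (trivial and two nontrivial characters).'' Over a base of characteristic $3$, the group $\bz/3\bz$ has no nontrivial characters: the only cube root of unity is $1$, and indeed $k[\bz/3\bz] \cong k[u]/(u^3)$ is a local Artinian ring, so $\bz/3\bz$-representations over $k$ do not split into eigenlines at all. The indecomposable $\bz/3\bz$-modules over $k$ are the Jordan blocks $J_1, J_2, J_3$, all with a single eigenvalue $1$, and only $J_1$ is irreducible. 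This is precisely why characteristic $3$ is hard here: the paper's own proof of \autoref{theorem:local} is careful to use only that $2$ is invertible (see \autoref{lemma:2-summands}) and never splits the $\bz/3\bz$-action, because it cannot. Your isotypic decomposition, and hence the ``slope $a(\scv)$ determined by the multiplicities of the nontrivial $\bbf_3$-characters,'' does not exist.

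Beyond that, even if one reformulated the local computation in terms of Jordan-block structure (which is the correct language), the two steps you flag as ``the main obstacle'' and the final ``direct check'' are precisely where all the content lies and neither is carried out. For a wild $\bz/3\bz$-cover the local degree of $\pi_*\overline x^*\scv^\vee$ depends on the ramification filtration and on how the inertia acts on each Jordan block, via a conductor-discriminant type computation (in the style of the Wood--Yasuda formulas the paper cites, but in the wild case); the dependence on $d$ is not obviously linear with a constant offset, since the jumps in the ramification filtration of $c^3 - c = t^{-d}$ vary with $d \bmod 3$. Also note that for general $n$ the constraint of \autoref{lemma:character-values} becomes $b_1 + 2b_2 + 3b_3 = n$, which has many nonnegative integer solutions, so the strong rigidity that drove the $n=1$ proof (forcing $\scv$ to look locally like the Hodge bundle plus trivial) is not available to you, and some replacement must be found. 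As it stands the proposal identifies the right test family but does not constitute a proof.
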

See \cite[Remark 9.2.7 and 9.2.8]{landesman:thesis} for some speculation related
to this conjecture.

Another related question, originally posed to us by Jordan Ellenberg, is whether there exist Northcott stacky heights on $\m$
in characteristic $3$.
We say a height function on the set of $k(t)$ points of a stack $\scx$ satisfies
the {\em Northcott property} if there are
finitely many such points of bounded height, cf. \cite[p.
4]{ellenbergSZB:heights-on-stacks}.
\begin{question}[Ellenberg]
	\label{question:}
	Does there exist a vector bundle $\scv$ on $\m$ over a finite field of
	characteristic $3$ whose induced height function $\height_\scv$ is
	Northcott?
\end{question}
\begin{remark}
	\label{remark:}
	In \cite[Theorem 9.2.4]{landesman:thesis}, I had claimed there do exist
	such Northcott bundles. However, the proof of this relies on \cite[Lemma
	9.7.1]{landesman:thesis}, which contains an error
	where I incorrectly claimed that trigonal curves have a certain minimal form without justification.
	This leaves the above question open.
\end{remark}

\subsection{Idea of the proof of \autoref{theorem:no-height}}
\label{subsection:idea-of-no-stacky-height-proof}
We use notation for Kodaira reduction type, as pictured in \cite[p.
365]{Silverman:Advanced}.
The idea of the proof of \autoref{theorem:no-height} is to show that any $\scv$ which induces the correct local
stacky height for places of type Kodaira $\mathrm{III}$ reduction necessarily induces the incorrect local stacky
height for cubic twists.

We now elaborate on the above idea.
The substack $BG \subset \m$ corresponding to elliptic curves with $j$-invariant $0$ has
geometric automorphism group $G$, where $G$ is the dicyclic group of order $12$.
When we restrict $\scv$ to $BG$, we obtain a $G$-representation $\rho$.
We show that some element $g\in G$ of order $4$ acts with a codimension $1$
fixed space and no eigenvalues equal to $-1$.
This is enough to deduce that $\rho$ is a sum of $1$-dimensional
representations, and hence factors through the abelianization of $G$.
We then show that any such vector bundle 
cannot detect nontrivial stacky heights associated to 
elliptic curves which are isotrivial
cyclic cubic twists.

\subsection{Overview}
The structure of this paper is as follows.
We review the notion of stacky heights in \autoref{section:eszb-review}.
We then reduce \autoref{theorem:no-height} to the statement about local stacky
heights, \autoref{theorem:local}, at the end of \autoref{section:reduction}.
Finally, we prove \autoref{theorem:local} at the end of
\autoref{section:computing-heights}, using a group theoretic input from
\autoref{section:s3-representations}.

\subsection{Acknowledgements}
I thank an extremely helpful referee for catching a number of egregious errors
in an earlier version.
I thank Jordan Ellenberg for bringing the questions addressed in this paper to
my attention,
and for many useful discussions.
I also thank Dori Bejleri, Pavel Etingof, Johan de Jong, Anand Patel, Matt Satriano, Ravi Vakil, 
Takehiko Yasuda,
and David Zureick-Brown
for helpful conversations.

\section{Review of the definition of heights on stacks}
\label{section:eszb-review}

We now recall the definition of heights on stacks introduced in
\cite{ellenbergSZB:heights-on-stacks}.

\begin{definition}
	\label{definition:tuning-stack}
	Let $k$ be a field, let $C$ be a regular proper integral curve over $k$,
	and let $K := K(C)$.
	Let $\scx$ be an algebraic stack over $k$ and
	$x: \spec K  \to \scx$ be a $K$-point.
	A {\em tuning stack}
	$\scc$
	for $x$ is a normal algebraic stack $\scc$ with finite diagonal
	together with a map
	$\overline{x}: \scc \to \scx$
	extending $x$ so that $\pi: \scc \to C$ is a birational coarse
	space map.
	A tuning stack $(\scc, \overline{x},\pi)$ is a universal tuning stack if it is terminal among all
	all tuning stacks.
\end{definition}

\begin{remark}
	\label{remark:number-field-tuning-stack}
	Although we will not need it,
	one can also extend \autoref{definition:tuning-stack}
to the number field case as
follows.
Let $L$ be a number field, $B = \spec \sco_L$, and let $\scx$ be an algebraic stack
over $B$. Let $K/L$ be a finite extension of number fields and $x: \spec K \to \scx$ be a $K$
point.
A tuning stack $\scc$ for $x$ is then a 
normal algebraic stack $\scc$ with finite diagonal
together with a map
$\overline{x}: \scc \to \scx$
extending $x$ so that $\pi: \scc \to \spec \sco_K$ is a birational coarse
space map.
\end{remark}

\begin{remark}
	\label{remark:}
	This definition of tuning stack differs slightly from that of 
	\cite[Definition 2.1]{ellenbergSZB:heights-on-stacks},
in that we do not require $\scx$ to be a stack over $C$. 
In particular, if $\scx$ is a stack over a base $k$,
we may discuss
tuning stacks and heights of points associated to function fields of
transcendence degree $1$ over
$k$.
\end{remark}

We now recall the notion of stacky height.
\begin{definition}[~\protect{\cite[Definition
	2.11]{ellenbergSZB:heights-on-stacks}}]
	\label{definition:stacky-height}
	Let $\scx$ be a proper algebraic stack over a field $k$ with finite
	diagonal, let $C$ be a smooth
	proper connected curve over $k$ with function field
	$K(C)$.
	Let $\scv$ be a vector bundle on $\scx$ and $x \in \scx(K(C))$.
	If $\scc$ is a tuning stack for $x$
	and $\overline{x}, \pi$ are the corresponding maps defined in
	\autoref{definition:tuning-stack},
	then we define the {\em height of $x$ with respect to $\scv$} as
	\begin{align*}
		\height_\scv(x) := -\deg_C\left( \pi_* \overline{x}^* \scv^\vee
		\right).	
	\end{align*}
	We also define the {\em stable height of $x$ with respect to $\scv$} as
	\begin{align*}
		\height^{\st}_\scv(x) := -\deg_\scc\left(\overline{x}^* \scv^\vee
		\right).	
	\end{align*}
\end{definition}

To make sense of this definition, one has to check various properties. 
For example, one must verify this notion is independent of
the choice of tuning stack. These are verified in
\cite[\S2.2]{ellenbergSZB:heights-on-stacks}.

One can also define heights in the number field case, but then one has to use
metrized line bundles and Arakelov heights as in \cite[Appendix
A]{ellenbergSZB:heights-on-stacks}. We will only be concerned with the
function field case, and so do not discuss this further.

Finally, we recall the notion of local stacky height, which will play a crucial
role in our proof.
\begin{notation}
	\label{notation:coarse}
	Let $L$ be a field and let $K$ either be a number field or a finite
	extension of $L(t)$ with regular model $C$. Here, $C$ is the
		spectrum of the ring of integers $\sco_K$ in the case $K$ is a number field and a regular proper curve over $L$ when $K$ is 
	a function field.
	Suppose $x: \spec K \ra \scx$ is a $K$ point of a stack $\scx$ locally of finite presentation and with finite diagonal, so that $\scx$ possess a coarse moduli space
	$\alpha: \scx \ra X$.
	Let $\scc$ denote the universal tuning stack associated to $x$, as in
	\autoref{definition:tuning-stack} or
	\autoref{remark:number-field-tuning-stack}.
	Let $C_v$ denote the localization of $C$ at $v$ and let 
	$\mathscr C_v := \mathscr C \times_C C_v$.
	Then, we have a diagram
	\begin{equation}
		\label{equation:}
		\begin{tikzcd} 
			\spec K \arrow[ddr, bend right=20] \ar{dr} \ar[bend
			left, drr,swap, "x"] && \\
			\mathscr C_v \ar[crossing over]{r}\ar{d} & \scc \ar {r}{\overline x} \ar {d}{\pi} & \scx \ar {d}{\alpha} \\
			C_v \ar{r} & C \ar {r} & X.
	\end{tikzcd}\end{equation}
	In this setting, for $\scv$ a vector bundle on $\scx$ and $v$ a place of
	$K$, we recall the definition of the 
	local stacky height, which is equivalent to that given in
	\cite[Definition 2.21]{ellenbergSZB:heights-on-stacks}.
	The
	{\em local stacky height} associated to
	$x$ and $\scv$ at the place $v$ as
	\begin{align*}
		\delta_{\scv;v}(x) := \deg (\coker\left(\gamma^* \pi^*\pi_*
		\overline x^*\scv^\vee \ra  \gamma^* \overline x^*
\scv^\vee\right)).
	\end{align*}
\end{notation}

\section{The reduction to $j=0$}
\label{section:reduction}

In this section, we explain how to reduce \autoref{theorem:no-height} to another
result about the structure of our vector bundle restricted to a particular
residual gerbe of $\m$. 
In order to prove our main result, we will need to examine elliptic curves with
extra automorphisms. The next remark describes them.
\begin{remark}
	\label{remark:dicyclic}
Recall that there are only $2$ possibilities for the geometric automorphism group of an elliptic curve in characteristic $3$.
It is either $\bz/2\bz$ or the dicyclic group of order $12$, which we denote
$G$. 
The following basic facts about $G$ can be found in \cite{groupnames-Dic3}.
This dicyclic group $G$ of order $12$ is a semidirect product $G \simeq
\bz/3\bz \rtimes \bz/4\bz$
where a generator of $\bz/4\bz$ acts on $\bz/3\bz$ by the nontrivial automorphism.
For the remainder, we fix a splitting to identify $\bz/4\bz$ as a subgroup of $G$.

It will be useful to note that the center of $G$ is $\bz/2\bz$, which can be identified as an index $2$ subgroup of $\bz/4\bz$ for any choice of splitting $\bz/4\bz \ra G$.
The quotient of $G$ by its central $\bz/2\bz$ is isomorphic to $S_3$.

Further, a semistable elliptic curve has geometric automorphism group $G$ if and only if its $j$ invariant is $0$.
\end{remark}

The key to proving \autoref{theorem:no-height} is the following:
\begin{theorem}
	\label{theorem:local}
	Suppose $k$ is an algebraically closed field of characteristic $3$.
	Suppose $\scv$ is a vector bundle on $\m$ for which $\height_\scv(x) =
	\height(x)$ for all points $x: \spec K \ra \m$, for $K$ ranging over
	finite extensions of $k(t)$.
	Let $\iota: [\spec k/G]\to \m$ be the residual gerbe over the point
	of $j$-invariant $0$, with
	$G$ as in \autoref{remark:dicyclic}.
	If $\gamma : [\spec k/(\mathbb Z/3 \mathbb Z)] \to [\spec k/G]$ denotes
	the quotient by $\mathbb Z/4 \mathbb Z$, then $\gamma^* \iota^* \scv$ is
	trivial.
\end{theorem}
We prove this in \autoref{subsection:proof-local}.

\subsection{Deducing \autoref{theorem:no-height}}
We next verify 
\autoref{theorem:no-height} assuming \autoref{theorem:local}.

\subsection{\autoref{theorem:local} implies \autoref{theorem:no-height}}
\label{subsection:proof-no-height}
To prove \autoref{theorem:local}, that no vector bundle can induce a stacky
height which agrees with Faltings height, We first observe we may assume $k$ is algebraically closed.
Indeed, both Faltings height and stacky height are preserved under base change
along extensions of $k$.

	By \autoref{theorem:local}, any point $x: \spec K \to \m$ factoring through $B(\bz/3\bz)$ at
	the point of $\m$ corresponding to $j$-invariant $0$ must have
	$\on{ht}_\scv(x) = 0$.
	Therefore, it suffices to construct an elliptic curve over $\spec k(t)$
	so that the associated map $x: \spec k(t) \to \m$ factors through
	$\spec k(t) \to [\spec k/(\mathbb Z/3\mathbb Z)] \to [\spec k/G] \to \m$
	but so that $x$ has nontrivial Faltings height.
	Indeed, we can
	easily construct cubic twists of the form $y^2 = x^3 -x + f(t)$
	with nontrivial Faltings height.
	As a concrete example, we can take $f(t) = t + t^4$ which has additive
	reduction at infinity and Faltings height $1$, as can be verified with a
	computer.
	It is therefore enough to verify the associated map $\spec k(t) \to \m$ factors through $[\spec k/(\mathbb
	Z/3 \mathbb Z)]$.
	Indeed, the isotrivial elliptic curve $y^2 = x^3 -x + f(t)$
	becomes trivial over the $\mathbb Z/3\mathbb Z$-extension $k(t)[v]/(v^3 - v -f(t))$, as then we
	can substitute $x-v$ for $x$ to obtain 
\begin{equation*}
	y^2 = (x-v)^3 -(x+v) + f(t)
	= x^3 -x - ( v^3 - v) + f(t)= x^3 - x - f(t) + f(t) = x^3 - x.\qed
\end{equation*}

\section{Representations of $G$ in characteristic $3$}
\label{section:s3-representations}

Throughout this section, we work over a field $k$ of characteristic $3$
containing all $4$th roots of unity.
Let $G$ denote the dicyclic group of order $12$, as described in
\autoref{remark:dicyclic}.
In order to prove \autoref{theorem:local}, we will need to analyze
$G$-representations in characteristic $3$.
The only result in this section we will use in the proof of
\autoref{theorem:local} is \autoref{proposition:abelianization-factor}.
To begin, we show we can decompose any $G$-representation into two
subrepresentations, depending on how the center of $G$ acts.

\begin{lemma}
	\label{lemma:2-summands}
	Let $V$ be a $G$ representation over a base field $k$ of characteristic
	$3$ containing $4$th roots of
	unity.
	Then $V$ splits as a direct sum $V_+ \oplus V_-$ where $V_+ \subset V$
	is the subspace on which the nontrivial central
	element $\alpha \in G$ acts by $\id$ and $V_- \subset V$ is the subspace
	on which $\alpha$ acts by $-\id$.
\end{lemma}
\begin{proof}
	This follows from a standard averaging trick. Namely,
	It is enough to realize $V_1$ and $V_2$ as subrepresentations of $V$.
	Since $2$ is invertible on the base and $\alpha$ has order $2$,
	$\rho(\alpha)$ is a diagonalizable matrix, so $V_1 \oplus V_2 = V$.

	To check $V_1$ and $V_2$ are subrepresentations, it suffices to show
	that for any $v \in V_i$ and any $g \in G$, $\rho(g) v \in V_i$.
	We first check this for $i = 1$.
	Since $\rho(\alpha)|_{V_1} = \id$ and $\rho(\alpha)|_{V_2} = -\id$, we have
$\frac{1}{2} \left( \id + \rho(\alpha)\right)$ is the projector $V \to V_1$ which acts
as the identity on $V_1$. This projector is well defined as $2$ is invertible on the base.
Then for any $v \in V_1$, using that $\alpha$ is central in $G$,
\begin{align*}
\rho(g) v &= \rho(g) \left( \frac{1}{2} \left( \id + \rho(\alpha)\right) v
\right) = \left( \frac{1}{2} \left( \id + \rho(\alpha)\right) \right) (\rho(g) v) \in
V_1
\end{align*}
The case $i = 2$ is completely analogous, using the projector 
$\frac{1}{2} \left( \id - \rho(\alpha) \right)$ in place of
$\frac{1}{2} \left( \id + \rho(\alpha) \right)$.
\end{proof}
Using the surjection $G \to S_3$, we will want the following elementary fact about $S_3$
representations.
\begin{lemma}
\label{lemma:2-eigenvalues}
Let $\rho: S_3 \to \on{GL}(V)$ be a indecomposable representation over a field $k$
of characteristic $3$,
with dimension
at least $2$. Let $\tau \in S_3$ be a transposition. Then, $\rho(\tau)$ has at
least two distinct eigenvalues.
\end{lemma}
\begin{proof}
Because $\tau$ has order $2$, $\rho(\tau)$ is semisimple, and all eigenvalues
are $\pm 1$.
Therefore, it is enough to show $\rho(\tau)$ cannot be $\pm \id$.
After possibly tensoring with the sign representation, it is enough to show $\rho(\tau)
\neq \id$.
Since transpositions generate $S_3$, $\rho(\tau) = \id$ implies $\rho$ is
trivial. Since $\dim \rho > 1$, it is not indecomposable.
\end{proof}

\begin{remark}
\label{remark:}
According to \cite[p. 160]{erdmannH:algebras-and-representation-theory},
there are precisely $6$ indecomposable representations of
$S_3$ in characteristic $3$: the trivial representation, the standard
representation, the permutation representation, and those three representations
tensored with the sign representation, and one can also deduce
\autoref{lemma:2-eigenvalues} directly from this classification.
I had claimed to give an alternate proof of this classification in
\cite[Theorem 9.8.2]{landesman:thesis}, though the proof there has a number of errors.
\end{remark}

Combining the above lemmas, we next show that if an order $4$ element has
$\eta$ as an eigenvalue under $\rho$, it also has $-\eta$ as an eigenvalue.
\begin{lemma}
\label{lemma:negative-eigenvalue}
Let $\rho$ be an indecomposable $G$-representation of dimension at least $2$
over a field $k$ of characteristic $3$ containing $4$th roots of unity, and $g
\in G$ has order $4$. If $\eta$ is a primitive fourth root of unity which is an eigenvalue of $\rho(g)$,
then so is $-\eta$.
\end{lemma}
\begin{proof}
Let $\alpha = g^2$ so that $\alpha$ generates the center of $G$.
By \autoref{lemma:2-summands}, we have that either $V = V_+$ or $V = V_-$ where
$\alpha$ acts on $V_+$ by $\id$ and $\alpha$ acts on $V_-$ by $-\id$.
After tensoring $\rho$ with a $1$ dimensional representation in which $g$ acts by
$\eta$, we may assume $V = V_+$.
Since $\alpha$ generates the kernel of the surjection $G \to S_3$ and $\alpha$
acts trivially on $V = V_+$, $\rho$ factors through $S_3$.
By \autoref{lemma:2-eigenvalues}, $\rho(g)$ must have both $\pm 1$ as an
eigenvalue, as we wished to show.
\end{proof}

We can now use the above lemma to deduce our main group-theoretic result for
$G$-representations.
\begin{proposition}
\label{proposition:abelianization-factor}
Suppose $\rho: G \to \on{GL}(V)$ is a $G$-representation over a field $k$ of characteristic $3$ containing $4$th roots of unity.
If there is an order $4$ element $g \in G$ so that $\rho(g)$ has a codimension
$1$ eigenspace with eigenvalue $1$
and the remaining eigenvalue is a primitive fourth root of unity $\eta$,
then $\rho$ is a sum of $1$-dimensional representations. In particular, $\rho :
G\to \on{GL}(V)$ factors through the abelianization of $G$, $\rho: G \to \mathbb
Z/4\mathbb Z \to \on{GL}(V)$.
\end{proposition}
\begin{proof}
By \autoref{lemma:negative-eigenvalue}, if $\rho$ is any
indecomposable representation of dimension at least $2$, then if a primitive
fourth root of unity $\eta$ shows
up as an eigenvalue of $\rho(g)$, so does $-\eta$. 
Therefore, $\rho$ is a sum of $1$-dimensional representations. 
Hence $\rho$ factors through the abelianization of $G$.
\end{proof}

\section{Review of local Faltings heights}
\label{section:local-heights}
In order to prove our main result, we will need the notion of local Faltings
heights, which we now briefly review.

For $K$ a finite extension of $k(t)$ and $v$ a closed point of the proper regular model of $K$ over $k$
and $x: \spec K \ra \m$, we let $\delta_{\scv;v}(x)$ denote the
local stacky height associated to $\scv$ and $x$ at $v$, as defined in
\autoref{notation:coarse}.
Further, 
\cite{ellenbergSZB:heights-on-stacks}
define a notion of stable stacky height $\height^{\st}_\scv(x)$,
see \autoref{definition:stacky-height}
which satisfies the relation $\height^{\st}_\scv(x) + \sum_v \delta_{\scv; v}(x) = \height_\scv(x)$.
We next recall the analogous notion of local and stable Faltings height associated to elliptic curves.
\begin{definition}
	\label{definition:local-height}
	Let $x: \spec K \ra \m$ be a point.
	For each closed point $v$, let $\spec L \ra \spec K$ be a finite extension over which $x$ acquires semistable reduction
	at $v$.
	Define the local stable Faltings height of $x$ at $v$, notated $\height_v^{\st}(x)$, to be
	$\height_v^{\st}(x) := \frac{1}{12}\sum_{w \mid v} \frac{1}{\deg(L/K)} \deg(\Delta_w)$,
	for $\Delta_w$ the discriminant of $w$ restricted to the local ring at $w$,
	for $w$ ranging over closed points of $L$ over $v$.
	Define the {\em stable Faltings height} by $\height^{\st}(x) := \sum_v \height^{\st}_v(x)$.

	Also, define the {\em local Faltings height} of $x: \spec K \ra \m$ at a closed point $v$, notated $\height_v(x)$, to be $\frac{1}{12}(\deg \Delta_v) - \height^{\st}_v(x)$.
\end{definition}
Using uniqueness of semistable models, one may verify the above notion of stable Faltings height is well defined, and can be computed explicitly in terms of the discriminant
at various closed points $v$.
In what follows, we use Kodaira's notation for reduction type of elliptic
curves. See, for example,
\cite[IV \S9]{Silverman:Advanced}, especially the chart on \cite[p.
365]{Silverman:Advanced}.
\begin{example}
	\label{example:}
	When the fiber at a $k$-rational closed point $v$ has reduction type $I_n^*$, we have $\deg \Delta_v= n+6$
	by \cite[p. 365]{Silverman:Advanced}.
	The valuation of the $j$-invariant of a given Kodaira reduction type is
	$\geq 0$ if and only if that curve has potentially good reduction after
	a base change, and
	is equal to $-n$ if that curve has $I_n$ reduction after a base change.
	From \cite[p. 365]{Silverman:Advanced}, we therefore find that $\height_v^{\st}(x) = \frac{1}{12}n$, 
	and hence $\height_v(x) =  \frac{1}{12}(n + 6) - \frac{1}{12}n = 1/2$.
\end{example}

\section{Computing heights on $\m$}
\label{section:computing-heights}

To conclude the proof of \autoref{theorem:no-height}, 
it remains to prove \autoref{theorem:local}. We do so at the end of this section. 
The basic idea is to compute what the restriction of $\scv$
to $BG$ has to be using \autoref{proposition:abelianization-factor} and
studying the action of a certain order $4$ element of $G$.
This is done via \autoref{lemma:character-values}, whose hypothesis is verified
in \autoref{lemma:local-iii-reduction}.

\begin{remark}
	\label{remark:semistable-reduction}
	If we have an elliptic curve with reduction $\mathrm{II}, \mathrm{III},
	\mathrm{IV}, \mathrm{II^*}, \mathrm{III^*},$ or $\mathrm{IV^*}$ at $v$,
	it has potentially good reduction, with corresponding $j$
	invariant $0$ by \cite[Theorem
	2.1]{miyamotoT:reduction-of-elliptic-curves-in-equal-characteristic-3}.
	Therefore, if $x: \spec K \ra \m$ is a map with one of the above reduction types at $v$, we must have that $v$ maps to the point of $\m$ lying over $j =0$ in the coarse moduli space,
for $j: \m \ra \bp^1$ the coarse moduli space of $\m$.
\end{remark}

We next introduce some notation used heavily in the remainder of the proof.

\begin{notation}
	\label{notation:g-cover}
	We will assume $k = \overline{k}$.
	Suppose we have some vector bundle $\scv$ on $\m$, a point $x: \spec K
	\to \m$, and a place $v$ of $K$ so that $\delta_{\scv;v}(x) =
	\height_v(x)$.
	Note that $\m$ has coarse space given by the $j$-invariant map $j: \m \to \mathbb P^1_k$.
	Construct the universal tuning stack $\mathscr C \to \m$. 
	Let $B(G_v)$ denote the residual
	gerbe of $\mathscr C$ at the place $v$, so that we
	obtain an induced map $B(G_v) \to BG \xrightarrow{\iota} \m$
	inducing an injection $G_v \to G$ on inertia groups.
	Let $s$ denote the geometric point over $BG \to \m$.
	Then, under the identification between vector bundles on $BG$ and
	$G$ representations, $\iota^* \scv$
	can be viewed as a $G$-representation $\rho: G \ra \gl(\scv|_s)$.

Now, recall that in \autoref{remark:dicyclic}, we chose a splitting $\bz/4\bz \ra G$.
Fix a generator $1$ of $\bz/4\bz$ in $G$. This abelian group then has a
diagonalizable action on $\scv|_s$.
When we restrict $\rho$ to a $\bz/4\bz$ representation,
we obtain a decomposition $\rho|_{\bz/4\bz} \simeq \oplus_{i=0}^3 \chi_i^{\oplus
b_i}$ where $\chi_i$ are the four characters of $\bz/4\bz$ given by $\chi_i(1) =
\zeta^i$ for
$\zeta$ a fixed primitive $4$th root of unity.
\end{notation}

We can now characterize the $b_i$ appearing in the above decomposition of $\rho|_{\bz/4\bz}$.
For the proof, it will be useful to recall the notation for local heights
introduced in \autoref{section:local-heights}.

\begin{lemma}
	\label{lemma:character-values}
	Suppose $\scv$ satisfies
	$\delta_{\scv;v}(x) = \height_v(x)$ for some place $v$ at which $x$ has
	type $\mathrm{III}$ reduction.
	Under the above decomposition of $\rho|_{\bz/4\bz} \simeq \oplus_{i=0}^3
	\chi_i^{\oplus b_i}$ from \autoref{notation:g-cover}, after possibly modifying our
	choice of generator $1$ for $\bz/4\bz$, we have $b_0 = \dim \rho - 1$, $b_1= 1$ and $b_2 = b_3 = 0$.
\end{lemma}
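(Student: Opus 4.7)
The plan is to directly compute both sides of the assumed equality $\delta_{\scv;v}(x) = \height_v(x)$ under the hypothesis of type $\mathrm{III}$ reduction and extract the constraints on the multiplicities $b_i$.

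First, I would identify $\height_v(x) = \frac{1}{4}$. By Tate's algorithm, type $\mathrm{III}$ reduction has discriminant of valuation $3$, and its inertia is cyclic of order $4$, which is tame since $\chr k = 3$. Consequently $x$ acquires good (not merely semistable) reduction after a tame quartic base change, so the local stable Faltings height at $v$ vanishes and $\height_v(x) = \frac{3}{12} - 0 = \frac{1}{4}$.

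Next, I would reduce the computation of $\delta_{\scv;v}(x)$ to a representation-theoretic calculation. Since the inertia at $v$ is a cyclic order-$4$ subgroup of the automorphism group $G$ of the limiting elliptic curve (of $j$-invariant $0$), which by our choice of splitting may be identified with $\bz/4\bz \subset G$, the pullback of $\scc$ to a neighborhood of $v$ in the $G$-cover of \autoref{notation:g-cover} takes the local form $[\spec R/(\bz/4\bz)]$, where $R = \sco_{V',v'}^{\sh} \cong \bk[[\pi]]$ and a fixed generator of $\bz/4\bz$ acts by $\pi \mapsto \xi \pi$ for $\xi$ a primitive $4$th root of unity. Using \autoref{lemma:degree-transfer}, this identifies the sheaves $\overline{x}^* \scv^\vee$ and $\pi^* \pi_* \overline{x}^* \scv^\vee$ near $v$ with the corresponding objects attached to the $\bz/4\bz$-representation $\rho^\vee |_{\bz/4\bz}$.

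Third, I would compute the contribution of each character. Grading $R$ by $\bz/4\bz$ via $\deg \pi = 1$, the line bundle $\scl_j$ corresponding to $\chi_j$ is the shifted free module $R[j]$, with pushforward $\pi_* \scl_j = \pi^{(-j) \bmod 4}\, R^{\bz/4\bz}$. The natural inclusion $\pi^* \pi_* \scl_j \hookrightarrow \scl_j$ therefore has cokernel $R/\pi^{(-j) \bmod 4}$, which has stacky degree $\frac{1}{4}((-j) \bmod 4)$ on the coarse space. Applied to $\scv^\vee \simeq \bigoplus_i \chi_{-i}^{b_i}$ this yields
\begin{align*}
  \delta_{\scv;v}(x) \;=\; \tfrac{1}{4}\bigl(0\cdot b_0 + 1\cdot b_1 + 2 \cdot b_2 + 3 \cdot b_3\bigr).
\end{align*}

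Finally, setting this equal to $\frac{1}{4}$ gives the Diophantine constraint $b_1 + 2 b_2 + 3 b_3 = 1$ in non-negative integers, whose unique solution is $b_1 = 1$, $b_2 = b_3 = 0$. The only other possible choice of generator for $\bz/4\bz$ is its inverse, which relabels characters by $\chi_j \mapsto \chi_{3j \bmod 4}$ and hence swaps $b_1 \leftrightarrow b_3$ while fixing $b_0$ and $b_2$; this is what the lemma's parenthetical about ``modifying our choice of generator'' absorbs. I expect the main obstacle to be keeping sign and grading conventions consistent throughout the cokernel computation (in particular, correctly matching the character of the monodromy generator acting on $\pi$ with the shift of the graded line bundle), together with the clean identification of the local structure of $\scc_v$ with a tame $\bz/4\bz$-quotient; once these are pinned down, the rest is linear algebra.
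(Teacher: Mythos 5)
Your proposal is correct and follows essentially the same strategy as the paper: identify $\height_v(x)=\frac{1}{4}$ for type $\mathrm{III}$ reduction, pass to the local $\bz/4\bz$-representation picture via \autoref{lemma:degree-transfer} (the paper routes this through \autoref{lemma:height-on-quotient-cover}), obtain the formula $\delta_{\scv;v}(x)=\frac{1}{4}(b_1+2b_2+3b_3)$, and solve the resulting Diophantine constraint in non-negative integers. The one substantive difference is that the paper simply cites \cite[Lemma 4.3]{woodY:mass-formulas-local-i} for that formula, whereas you rederive it from scratch with an explicit graded cokernel computation ($\pi_*\scl_j$ generated by $\pi^{(-j)\bmod 4}$, cokernel $R/\pi^{(-j)\bmod 4}$, stacky degree $\frac{1}{4}((-j)\bmod 4)$). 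Your version is more self-contained and makes the sign/grading bookkeeping and the dual-representation twist $\scv^\vee\simeq\bigoplus_i\chi_{-i}^{b_i}$ completely explicit, which is exactly where such arguments tend to go wrong; the paper's citation is shorter but defers those conventions to the reference. Your treatment of the ``modify the generator'' caveat via the relabeling $\chi_j\mapsto\chi_{3j\bmod 4}$ swapping $b_1\leftrightarrow b_3$ is also a correct and clean way to absorb the ambiguity.
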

\begin{proof}
	We first set up notation to describe the $\bz/4\bz$ representation.
	From the explicit computation of the discriminant for an elliptic curve
	$x: \spec K \ra \m$ with a closed point $v$ of type $\mathrm{III}$ reduction, we know
	$\height_v(x) = \frac{1}{4}$, by \cite[p. 365]{Silverman:Advanced}.
	Therefore, we must have $\delta_{\scv;v}(x) = 1/4$.
	Now, $x$ induces a tuning stack map $\overline{x}: \scc \to \m$ with
	coarse
	space $\pi: \scc \to C$. 
	Let $C_v$ be the local scheme at $v \in C$ and let $\scc_v =
	\pi^{-1}(C_v)$.
	Because $v$ acquires semistable reduction after a degree $4$ cyclic
	extension, but not after any smaller extension, the residual gerbe of
	$\scc$ at $v$ is
$G_v := \mathbb Z/4\mathbb Z$ and the induced map on residual gerbes $B(G_v) \to
BG$ is obtained from an injection
	$\mathbb Z/4\mathbb Z = G_v \to G$.
	Denote by $\alpha: B(G_v) \to BG \to \m$ the composite map above.
	We may view $\alpha^* \mathscr V^\vee$ as a $\mathbb Z/4\mathbb Z$
	representation. 

	We claim that under the above identification, we may view
	$\alpha^* \mathscr V^\vee$ as the direct sum of a $1$-dimensional
	faithful representation of $\bz/4\bz$ and a codimension $1$ trivial representation.
	This will complete the proof because after modifying the generator, we
	can assume the faithful representation is $\chi_1$, in which case
	$b_0 = \dim \rho - 1$, $b_1= 1$ and $b_2 = b_3 = 0$.

	To prove our claim, we will need the following assertion:
	In general, if $\scw$ is a vector bundle on $\scc_v$, 
	we assert one can read off the degree of $\scw$ from the $\mu_4$ representation
	corresponding to the restriction of $\scw$ to the residual $\mu_4$ gerbe over $v$.
	We now explain how this assertion follows from \cite[Th\'eor\`eme
	3.13]{borne:fibres-paraboliques-et-champ-des-racines},
	which shows there is a correspondence between vector bundles on the tame
	stacky curve $\scc_v$ and parabolic bundles on $C_v$. This correspondence
	preserves degree, which is shown for proper curves in \cite[Th\'eor\`eme
	4.3]{borne:fibres-paraboliques-et-champ-des-racines}, but the proof
	works equally well for localizations by restricting from the proper
	case. 
	However, the definition of parabolic degree on $C_v$,
	as in \cite[D\'efinition
	4.1]{borne:fibres-paraboliques-et-champ-des-racines}
	is then given in terms of the corresponding $\mu_4$ representation obtained by
	restricting the vector bundle on $\scc_v$ to $v$.

	We next describe how to compute the degree of the vector bundle $\scw$
	from the previous paragraph.
	Choose an isomorphism $\mu_4 \simeq \bz/4\bz$, which is possible since $k =
	\overline k$.
	Suppose the vector bundle $\scw$ on $\scc_v$ corresponds to a $\mathbb Z/4\mathbb Z$-representation $\rho'$.
	It follows from the definition of parabolic degree that
	we can express the degree of $\scw$ as follows:
	Let $\zeta$ denote a fixed primitive fourth root of unity as in
	\autoref{notation:g-cover}.
	There is some generator $g \in \mathbb Z/4 \mathbb Z$ so that
	$\rho'(g)$ has a $b_i$ dimensional eigenspace with eigenvalue $\zeta^i$, and
	$\deg \scw = \sum_{i=1}^3 \frac{b_i}{4}.$

	We now return to proving our claim.
	If $g$ is a generator of $\bz/4\bz$, we have seen any nontrivial eigenvalue of $\rho(g)$ contributes at least $1/4$ to the
	degree $\delta_{\scv;v}(x)$.
	Since $\delta_{\scv;v}(x)= 1/4$, we find
	that $\alpha^* \mathscr V^\vee$ must be the direct sum of a
	codimension $1$ trivial representation and a $1$-dimensional nontrivial
	representation.
	Moreover, that $1$-dimensional representation must be faithful, as
	otherwise $\delta_{\scv;v}(x) = 1/4$ would be a multiple of $1/2$.
\end{proof}

In order to apply \autoref{lemma:character-values} to prove
\autoref{theorem:local}, we need to verify its hypothesis holds. We verify this
in \autoref{lemma:local-iii-reduction} below.
The specific $x$ we will use is the following elliptic curve.
\begin{example}
	\label{example:iii-example}
	The magma code
\begin{verbatim}
F<t> :=FunctionField(GF(3));
E := EllipticCurve([0,t+t^2,0,t+t^2 +t^3,t^2+t^4+t^5]);
LocalInformation(E);
\end{verbatim}
shows that the elliptic curve $y^2 = x^3 + (t+t^2)x^{2}z + (t+t^2 +t^3)xz^2 +
(t^2+t^4+t^5)z^3$
has a  unique place of additive reduction $(t)$.
Further, at $(t)$, this curve has
reduction type $\mathrm{III}$ and discriminant of valuation $3$.
\end{example}
We will use the following criterion for local stacky height to agree with local Faltings
height.
\begin{lemma}
	\label{lemma:unique-local-height}
	Suppose $\scv$ is a vector bundle on $\m$ for which $\height_\scv(x) = \height(x)$ for all points $x: \spec K \ra \m$, for $K$ a finite extension of $k(t)$.
	Any point $y: \spec k(t) \to \m$ which has a unique place $v$ of additive
	reduction, i.e., a unique place with nontrivial local height, gives
	an example of a point for which $\delta_{\scv;v}(x) = \height_v(x)$.
\end{lemma}
\begin{proof}
	Let $K/k(t)$ denote an extension on which $y$ acquires semistable
	reduction and let $z: \spec K \to \m$ denote the corresponding point.
	Because $\height_\scv(z) = \height(z)$, and both
	$\height_\scv(z) = \height_\scv^{\st}(z) = \deg(K/k(t))\height^{\st}_\scv(y)$
	and 
	$\height(z) = \height^{\st}(z) = \deg(K/k(t))\height^{\st}(y)$
	\cite[Proposition 2.14]{ellenbergSZB:heights-on-stacks},
	we conclude
	$\height_\scv^{\st}(y) = \height^{\st}(y)$.
	Because we are assuming $\height_\scv^{\st}(y) + \delta_{\scv;v}(y) =
	\height_\scv(y) = \height(y) = \height^{\st}(y) + \height_{v}(y)$
	and we have shown $\height_\scv^{\st}(y)  = \height^{\st}(y)$,
	we conclude $\delta_{\scv;v}(y) = \height_{v}(y)$.
\end{proof}

The next lemma verifies the hypothesis of
\autoref{lemma:character-values}.
\begin{lemma}
	\label{lemma:local-iii-reduction}
	Suppose $\scv$ is a vector bundle on $\m$ for which $\height_\scv(x) = \height(x)$ for all points $x: \spec K \ra \m$, for $K$ a finite extension of $k(t)$.
	The point $y: \spec k(t) \to \m$ of \autoref{example:iii-example} (base
	changed from $\mathbb F_3$ to $k$) gives
	an example of a point for which $\delta_{\scv;v}(x) = \height_v(x)$
	at the place $(t)$ of additive type $\mathrm{III}$ reduction.
\end{lemma}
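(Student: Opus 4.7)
The plan is to apply \autoref{lemma:unique-local-height} to the elliptic curve of \autoref{example:iii-example}, base changed from $\mathbb F_3$ to $k$. Once we verify that this base change has $(t)$ as its unique place of additive reduction, the lemma will deliver the desired equality $\delta_{\scv;v}(y) = \height_v(y)$ at $v = (t)$.

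Over $\mathbb F_3(t)$ this is exactly the content of the Magma output in \autoref{example:iii-example}: the curve $y^2 = x^3 + (t+t^2)x^2 z + (t+t^2+t^3)xz^2 + (t^2+t^4+t^5)z^3$ has $(t)$ as its unique place of additive reduction (of Kodaira type $\mathrm{III}$, with discriminant of valuation $3$), while every other place of $\mathbb P^1_{\mathbb F_3}$, including $\infty$, has either good or multiplicative reduction and so contributes trivial local Faltings height by \autoref{definition:local-height}.

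It remains to verify that this property survives base change from $\mathbb F_3$ to $k$. Since $k$ is perfect of characteristic $3$, the extension $k/\mathbb F_3$ is separable, so for any closed point $v$ of $\mathbb A^1_{\mathbb F_3}$ and any place $w$ of $k(t)$ lying over $v$, the extension of local rings $\mathbb F_3[t]_{(v)} \to k[t]_{(w)}$ is \'etale, and similarly at $\infty$. Formation of the minimal proper regular model commutes with such \'etale base change, so the Kodaira reduction type at $w$ agrees with the Kodaira reduction type at $v$. Consequently the additive-reduction places of the base-changed curve over $k(t)$ are precisely the $k[t]$-primes above $(t) \subset \mathbb F_3[t]$, and since $t$ remains prime in $k[t]$ there is exactly one such place, namely $(t) \in k[t]$. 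Applying \autoref{lemma:unique-local-height} then yields $\delta_{\scv;v}(y) = \height_v(y)$, as required. I do not anticipate any serious obstacle here: the substantive input is the Magma computation of \autoref{example:iii-example} together with \autoref{lemma:unique-local-height}, and the remaining work is the routine check that separable base change of the ground field neither creates nor destroys additive-reduction places.
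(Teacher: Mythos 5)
Your proof is correct and follows the same route as the paper's: observe that $(t)$ is the unique additive-reduction place of the base-changed curve over $k(t)$ (using the Magma computation of \autoref{example:iii-example}) and then invoke \autoref{lemma:unique-local-height}. The one thing you do that the paper's proof leaves implicit is the verification that passing from $\mathbb F_3(t)$ to $k(t)$ neither creates nor destroys additive-reduction places; your argument via the separability of $k/\mathbb F_3$, the resulting \'etaleness of the local ring extensions, and the compatibility of minimal regular models (equivalently, Tate's algorithm) with unramified base change is sound and fills that small gap cleanly.
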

\begin{proof}
	Let $v = (t)$ denote the place of $k(t)$.
	Note this is the unique place of $k(t)$ at which $y$ has
	additive reduction, by \autoref{example:iii-example}.
	The lemma then follows from \autoref{lemma:unique-local-height}.
\end{proof}

We can now prove the main result, \autoref{theorem:local}.

\subsection{Proof of \autoref{theorem:local}}
\label{subsection:proof-local}
	Retain notation from \autoref{notation:g-cover}.
	By \autoref{lemma:local-iii-reduction}, 
	the hypothesis of 
	\autoref{lemma:character-values} holds, and so
	an order $4$ element of
	$G$ has a $1$-dimensional $\zeta$ eigenspace and a codimension one $1$-eigenspace.
	By \autoref{proposition:abelianization-factor},
	$\iota^*\scv$ corresponds to a representation $\rho: G \to \on{GL}(V|_s)$
	that
	splits as the direct sum of $1$-dimensional representations.
	factoring through
	$\bz/4\bz$.
	Therefore,
	$\rho|_{\mathbb Z/3\mathbb Z}$ is trivial.\qed

\bibliographystyle{alpha}
\bibliography{/home/aaron/Dropbox/master}

\def\cprime{$'$} \providecommand{\noopsort}[1]{}
\begin{thebibliography}{ESZB21}

\bibitem[Bor07]{borne:fibres-paraboliques-et-champ-des-racines}
Niels Borne.
\newblock Fibr\'{e}s paraboliques et champ des racines.
\newblock {\em Int. Math. Res. Not. IMRN}, (16):Art. ID rnm049, 38, 2007.

\bibitem[EH18]{erdmannH:algebras-and-representation-theory}
Karin Erdmann and Thorsten Holm.
\newblock {\em Algebras and representation theory}.
\newblock Springer Undergraduate Mathematics Series. Springer, Cham, 2018.

\bibitem[ESZB21]{ellenbergSZB:heights-on-stacks}
Jordan~S. Ellenberg, Matthew Satriano, and David Zureick-Brown.
\newblock Heights on stacks and a generalized {B}atyrev-{M}anin-{M}alle
  conjecture.
\newblock {\em arXiv preprint arXiv:2106.11340v1}, 2021.

\bibitem[gro]{groupnames-Dic3}
Group description dic3.
\newblock \url{https://people.maths.bris.ac.uk/~matyd/GroupNames/1/Dic3.html}.
\newblock Accessed: March 3, 2023.

\bibitem[Lan21]{landesman:thesis}
Aaron Landesman.
\newblock {\em A thesis of minimal degree: two}.
\newblock 2021.
\newblock Thesis (Ph.D.)--Stanford University.

\bibitem[MT05]{miyamotoT:reduction-of-elliptic-curves-in-equal-characteristic-3}
Roland Miyamoto and Jaap Top.
\newblock Reduction of elliptic curves in equal characteristic 3 (and 2).
\newblock {\em Canad. Math. Bull.}, 48(3):428--444, 2005.

\bibitem[Sil94]{Silverman:Advanced}
Joseph~H. Silverman.
\newblock Advanced topics in the arithmetic of elliptic curves.
\newblock 151:xiv+525, 1994.

\end{thebibliography}

\end{document}